\title{The no-three-in-line problem on a torus}
\author{Jim Fowler}
\address[J.~Fowler]{The Ohio State University, Columbus, 43210}
\email[J.~Fowler]{fowler@math.osu.edu}
\author{Andrew Groot}
\address[A.~Groot]{The Ohio State University, Columbus, 43210}
\email[A.~Groot]{groot.2@osu.edu}
\author{Deven Pandya}
\address[D.~Pandya]{The Ohio State University, Columbus, 43210}
\email[D.~Pandya]{pandya.33@osu.edu}
\author{Bart Snapp}
\address[B.~Snapp]{The Ohio State University, Columbus, 43210}
\email[B.~Snapp]{snapp@math.ohio-state.edu}
\theoremstyle{plain}
\newtheorem{thm}{Theorem}[section]
\newtheorem{prop}[thm]{Proposition}
\newtheorem{lem}[thm]{Lemma}
\newtheorem{ques}{Question}
\newtheorem{conj}{Conjecture}
\theoremstyle{definition}
\newtheorem*{dfn}{Definition}
\theoremstyle{remark}
\DeclareMathOperator{\Spec}{Spec}
\renewcommand{\emptyset}{\varnothing}
\newcommand{\N}{\mathbb N}
\newcommand{\Z}{\mathbb Z}
\newcommand{\x}{\mathbf x}
\renewcommand{\l}{\ell}
\renewcommand{\le}{\leqslant}
\newcommand{\hf}{\mathop{\mathrm{HF}}\nolimits}
\begin{document}

\begin{abstract}
  Let $T(\Z_m \times \Z_n)$ denote the maximal number of points that
  can be placed on an $m \times n$ discrete torus with ``no three in a
  line,'' meaning no three in a coset of a cyclic subgroup of $\Z_m
  \times \Z_n$.  By proving upper bounds and providing explicit
  constructions, for distinct primes $p$ and $q$, we show that
  \begin{align*}
    T(\Z_p \times \Z_{p^2}) &= 2p, \\
    T(\Z_p \times \Z_{pq})  &= p+1.
  \end{align*}
  Via Gr\"obner bases, we compute $T(\Z_m \times \Z_n)$ for $2 \leq m
  \leq 7$ and $2 \leq n \leq 19$.
\end{abstract}

\maketitle

\section{Introduction}

In the \textit{no-three-in-line problem} \cite{Dudeney}, one wishes
to place as many points as possible on an $n\times n$ lattice with no
three points on a line.  A predominant conjecture is that $2n$ points
can be placed with no three in a line for all $n\times n$
lattices---note this requires $2$ points for each row (or column), and
hence cannot be improved upon.

As a lower bound, Paul Erd\"os in \cite{kR51} proved that for a
$p\times p$ lattice, $p$ being prime, one can place $p$ points via a
``parabola'' modulo $p$.  This means for $x = 0,\dots,p-1$, no
three of the points $(x, x^2 \bmod{p})$ will be in a line.  Later in
\cite{HJSW75}, this lower bound was improved by considering a
$2p\times 2p$ lattice and placing the points on a ``hyperbola'' modulo
$p$. This construction is somewhat more complex, and for a $2p\times
2p$ lattice, it permits $3p$ points to be placed with no three in a
line. In summary, at this point it is known that $(\frac{3}{2} -
\varepsilon)n$ points can be placed on an $n\times n$ grid.

Instead of attacking this long unsolved problem, we analyze a
variation of it.  Again consider an $n\times n$ lattice, but now
associate opposite edges, so that we may view this as a discrete
$n\times n$ torus (see Figure~\ref{figure:donut}).  We define
\textit{lines} on this discrete torus to be the images of lines in
$\Z\times \Z$ under the covering projection.  We ask the following
question:

\begin{figure}
\label{figure:donut}
\includegraphics{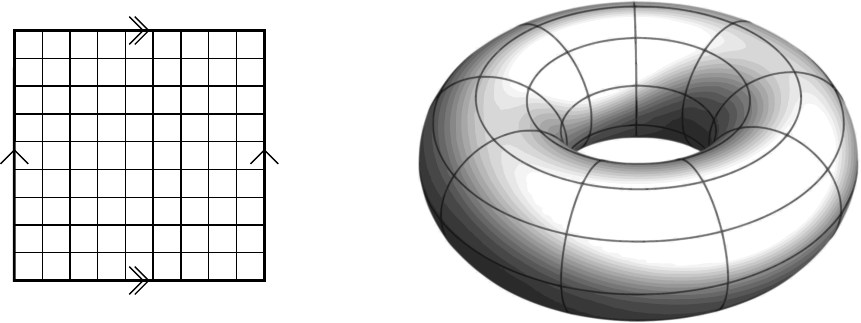}
\begin{caption}
{A $9\times 9$ discrete torus}
\end{caption}
\end{figure}

\begin{ques}
How many points can be placed on an $n\times n$ discrete torus, such
that no three points are in a line?
\end{ques}

In this setting, we reproduce Erd\"os' lower bound for $p \times p$
discrete tori in Theorem~\ref{thm:erdos-analogue}.  However, the
explicit examples in Section \ref{S:ComAlg} show that this lower bound
cannot be improved with the methods of \cite{HJSW75}.  Interestingly,
the size of solutions on tori diverge from those on the lattice almost
immediately. On a $3\times 3$ lattice we may place $6$ points, while
on a corresponding torus, we may only place $4$ points.  Since the
lines can ``wrap around'' the edges, it is harder to place points so
that there are no three in a line.
\begin{figure}
\includegraphics{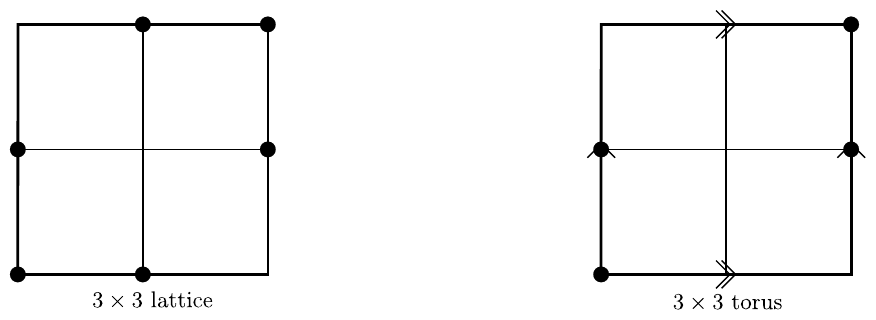}
\begin{caption}
{Maximal solutions on the lattice and the torus}
\end{caption}
\end{figure}

In Section \ref{S:DT}, we give upper and lower bounds (some of which
are given alongside maximal constructions) for the number of points
that can be placed on various $n\times m$ discrete tori with no three
points in a line.  Finally in Section \ref{S:ComAlg}, we will give
some empirical results and give a description of the methods used to
obtain them.

%

\section{Results for discrete tori}\label{S:DT}

To start, note that working with an $n\times n$ torus is essentially a
reformulation of the no-three-in-line problem for the group
$\Z_n\times \Z_n$.

\begin{dfn}
We will say that two points $a = (x_a,y_a)$ and $A
=(x_A,y_A)$ are \textbf{congruent} modulo $n$ if
\[
x_a \equiv x_A \bmod{n} \qquad\text{and}\qquad y_a \equiv y_A \bmod{n} 
\]
and in this case we will simply write $a \equiv A \bmod{n}$.
\end{dfn}

\begin{dfn}\label{D:lines} 
Three distinct points $a$, $b$ and $c$ are \textbf{in a line} on the discrete torus
$\Z_n\times \Z_n$ if and only if there are three points $A$, $B$, and
$C$ in a line in the universal cover $\Z\times \Z$ such that
\[
a \equiv A \bmod{n}, \qquad b \equiv B \bmod{n}, \qquad c \equiv C \bmod{n}.
\]
\end{dfn}

\subsection{Upper bounds}

While it is easy to show that at most $2n$ points can be placed with
no three in a line on a $n\times n$ lattice, this bound is much too
high to be of real use when studying the no-three-in-line problem on
the discrete torus. We arrived at a somewhat general question that
sheds some light on this:

\begin{ques}
Given a group, how many elements of it can be chosen so that no three
are in a coset of a (maximal) cyclic subgroup?
\end{ques}

Essentially, lines on a discrete torus correspond to cosets of cyclic
subgroups of $\Z_n\times \Z_n$. Since we are interested in looking at
whole lines, we can restrict ourselves to looking at cyclic subgroups
that are maximal with respect to set-inclusion.

\begin{dfn} 
Given a group $G$, let $T(G)$ denote the number of elements of $G$
that can be chosen so that no three are in a coset of a cyclic
subgroup of $G$.
\end{dfn}

Note, for any cyclic group $Z$, $T(Z) = 2$, hence when $m$ and $n$ are
relatively prime, $T(\Z_m\times\Z_n) = 2$.

\begin{prop}
For any positive integer $n$, $T(\Z_2 \times \Z_{2n}) = 4$.
\end{prop}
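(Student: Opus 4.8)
The plan is to prove the two inequalities $T(\Z_2 \times \Z_{2n}) \ge 4$ and $T(\Z_2 \times \Z_{2n}) \le 4$ separately, the first by an explicit configuration and the second by a one-line pigeonhole argument. Throughout I use the group-theoretic reading of ``three points in a line'': points $a,b,c$ are in a line exactly when, after translating $a$ to the identity, the differences $b-a$ and $c-a$ generate a cyclic subgroup.

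For the lower bound I would take the four points $(0,0)$, $(1,0)$, $(0,1)$, $(1,1)$ in $\Z_2 \times \Z_{2n}$; these are pairwise distinct for every $n \ge 1$ since $0 \ne 1$ in $\Z_{2n}$. The claim is that no three of them lie in a coset of a cyclic subgroup. For each of the $\binom{4}{3}=4$ triples, translate one point to the origin; the two remaining differences are one of the pairs $\{(1,0),(0,1)\}$, $\{(1,0),(1,1)\}$, $\{(0,1),(1,1)\}$, $\{(1,1),(0,1)\}$, and in every case the subgroup they generate contains both $(1,0)$ and $(0,1)$, hence is all of $\Z_2 \times \Z_{2n}$. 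Since $\Z_2 \times \Z_{2n}$ contains the non-cyclic subgroup $\{0,1\}\times\{0,n\}\cong\Z_2\times\Z_2$, it is itself non-cyclic for every $n\ge 1$, so none of these triples can sit inside a cyclic subgroup, let alone a coset of one. This shows at least four points can be placed.

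For the upper bound, suppose five points are placed with no three in a line and project them to the first coordinate under $\Z_2 \times \Z_{2n} \to \Z_2$. By pigeonhole, at least three of the five share a first coordinate, so all three lie in a single coset of $\{0\}\times\Z_{2n}$; but $\{0\}\times\Z_{2n}\cong\Z_{2n}$ is cyclic, so those three points are in a line, a contradiction. Hence at most four points can be placed, and combined with the construction this gives $T(\Z_2 \times \Z_{2n}) = 4$. I do not expect a real obstacle: the only points requiring a little care are the degenerate case $n=1$ (where the four chosen points are still distinct and $\Z_2\times\Z_2$ has no $3$-element cyclic coset at all) and the explicit verification in the second paragraph that $\Z_2\times\Z_2$ is the sole obstruction to cyclicity of subgroups of $\Z_2\times\Z_{2n}$.
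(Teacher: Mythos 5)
Your proof is correct and follows essentially the same route as the paper: exhibit an explicit four-point configuration and check that no three of its points lie in a coset of a cyclic subgroup of $\Z_2 \times \Z_{2n}$. You additionally spell out the upper bound $T(\Z_2 \times \Z_{2n}) \le 4$ via pigeonhole on the two cosets of the cyclic subgroup $\{0\}\times\Z_{2n}$, a step the paper leaves implicit behind its ``by inspection'' picture; this is a welcome completion but not a different method.
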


\begin{proof}
Consider the following arrangement of $4$ points:
\[
\includegraphics{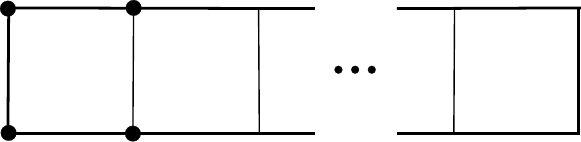}
\]
By inspection we can see that no three elements are in a line on
this torus and hence $T(\Z_2 \times \Z_{2n}) = 4$.
\end{proof}

\begin{prop}
For any positive integers $m$ and $n$, $T(\Z_m\times \Z_m) \le T(\Z_m
\times \Z_{mn})$.
\end{prop}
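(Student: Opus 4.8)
The plan is to exhibit an explicit injection: given an optimal configuration on $\Z_m \times \Z_m$, I will map it into $\Z_m \times \Z_{mn}$ via the natural monomorphism $\iota \colon \Z_m \times \Z_m \to \Z_m \times \Z_{mn}$ sending $(x,y)$ to $(x, ny)$ (equivalently, $(x,y) \mapsto (x, y)$ after identifying $\Z_m$ inside $\Z_{mn}$ as the unique subgroup of order $m$, namely $n\Z_{mn}$). First I would fix a set $S \subseteq \Z_m \times \Z_m$ with $|S| = T(\Z_m \times \Z_m)$ and no three points in a coset of a cyclic subgroup, and set $S' = \iota(S)$. Since $\iota$ is injective, $|S'| = |S|$, so it suffices to show $S'$ has no three points in a coset of a cyclic subgroup of $\Z_m \times \Z_{mn}$.

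The key step is the contrapositive: suppose $a', b', c' \in S'$ were three distinct points lying in a coset $g + \langle d \rangle$ of a cyclic subgroup of $\Z_m \times \Z_{mn}$. Write $a' = \iota(a)$, etc., with $a,b,c \in S$ distinct. I would argue that $\iota(\Z_m \times \Z_m)$ is itself a subgroup, so the coset $g + \langle d \rangle$ meets $\iota(\Z_m \times \Z_m)$ in (at least three points, hence) a coset of the subgroup $\langle d \rangle \cap \iota(\Z_m \times \Z_m)$ inside $\iota(\Z_m \times \Z_m)$. Pulling this back through the isomorphism $\iota \colon \Z_m \times \Z_m \xrightarrow{\ \sim\ } \iota(\Z_m \times \Z_m)$, the points $a,b,c$ lie in a coset of a cyclic subgroup of $\Z_m \times \Z_m$ (cyclicity is preserved because subgroups of cyclic groups are cyclic, and $\langle d \rangle \cap \iota(\Z_m \times \Z_m)$ is a subgroup of the cyclic group $\langle d \rangle$). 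This contradicts the choice of $S$, completing the argument. To be careful, I should also handle the degenerate possibility that the relevant intersection subgroup is all of $\langle d\rangle$ or is trivial; in the trivial case three distinct points cannot lie in a single coset, and in any case the pulled-back subgroup is cyclic.

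The main obstacle I anticipate is purely bookkeeping: making precise the claim that three collinear-in-the-torus points all lying in the image subgroup are collinear-in-the-torus \emph{within} that subgroup. Concretely, one must check that if $\langle d \rangle$ is cyclic in $\Z_m \times \Z_{mn}$ and contains the differences $b' - a'$ and $c' - a'$, both of which lie in $\iota(\Z_m \times \Z_m)$, then there is a single element $e \in \iota(\Z_m \times \Z_m)$ with $b' - a', c' - a' \in \langle e \rangle$. This follows because $b' - a'$ and $c' - a'$ generate a cyclic subgroup of $\langle d \rangle$ (any subgroup of a cyclic group is cyclic, generated by a single element $e$, which lies in $\langle d \rangle$), and since $b' - a', c' - a' \in \iota(\Z_m \times \Z_m)$ and this is a subgroup, we may take $e$ to be a suitable power of $d$ landing in the image — but in fact $e$ is automatically in the image since $\langle e \rangle = \langle b'-a', c'-a'\rangle \subseteq \iota(\Z_m\times\Z_m)$. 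Transporting back via $\iota^{-1}$ then exhibits $a, b, c$ in the coset $\iota^{-1}(a') + \langle \iota^{-1}(e) \rangle$, a coset of a cyclic subgroup of $\Z_m \times \Z_m$, which is the desired contradiction.
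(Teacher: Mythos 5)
Your proof is correct, but it runs in the opposite direction from the paper's. The paper treats the $m\times mn$ torus as a \emph{cover} of the $m\times m$ torus: it lifts a good configuration to a fundamental domain (an ``$m\times m$ section'') and observes that any line through three lifted points pushes forward, under the quotient homomorphism $\Z_m\times\Z_{mn}\onto\Z_m\times\Z_m$, to a line through the three original points, using that homomorphic images of cyclic subgroups are cyclic. You instead realize $\Z_m\times\Z_m$ as a \emph{subgroup} of $\Z_m\times\Z_{mn}$ via $(x,y)\mapsto(x,ny)$ and intersect: a coset of a cyclic subgroup of the big group meets the image subgroup in a coset of a cyclic subgroup of the image (subgroups of cyclic groups being cyclic), which pulls back through the isomorphism onto the image. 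Both arguments are sound, and they are dual in a precise sense: the paper's argument shows more generally that $T(G/N)\le T(G)$ for any quotient, while yours shows that $T(H)\le T(G)$ for any subgroup $H\le G$; the proposition at hand follows either way since $\Z_m\times\Z_m$ is simultaneously a quotient and a subgroup of $\Z_m\times\Z_{mn}$. Your write-up is also considerably more detailed than the paper's two-sentence sketch, and your handling of the degenerate cases (trivial intersection) is a welcome bit of care, though as you note it is only bookkeeping.
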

\begin{proof}
This follows as an $m\times mn$ torus is a cover for an $m\times m$
torus. Hence, the lines that pass through any three points of the
$m\times m$ torus are precisely those that pass through an $m\times m$
section of an $m\times mn$ torus.

\end{proof}

\begin{thm}
For any prime integer $p$, $T(\Z_p \times \Z_p)\le p+1$.
\end{thm}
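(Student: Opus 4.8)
The plan is to identify $\Z_p \times \Z_p$ with the affine plane $\mathbb{F}_p \times \mathbb{F}_p$ over $\mathbb{F}_p$ (the plane $\mathrm{AG}(2,p)$), to recognize that a subset containing no three points in a coset of a cyclic subgroup is exactly a subset containing no three collinear points (an \emph{arc}), and then to bound the size of an arc by a short double-counting argument. The genuinely non-obvious input is a parity observation; everything else is bookkeeping.

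First I would set up the dictionary. The nontrivial proper subgroups of $\Z_p \times \Z_p$ are the $p+1$ subgroups of order $p$; each corresponds to one of the $p+1$ directions in $\mathbb{F}_p\times\mathbb{F}_p$, and its $p$ cosets are the $p$ parallel affine lines in that direction. Thus cosets of cyclic subgroups of $\Z_p \times \Z_p$ are precisely the affine lines of $\mathrm{AG}(2,p)$ (the only other cyclic subgroup is trivial, whose cosets are points and so can never contain three distinct elements). I would also check that this matches Definition~\ref{D:lines}: three points of $\mathbb{F}_p\times\mathbb{F}_p$ have collinear lifts in $\Z\times\Z$ if and only if they lie on a common affine line of $\mathrm{AG}(2,p)$ --- the forward direction because the $2\times2$ collinearity determinant reduces modulo $p$, and the reverse because an affine line is the reduction of the $\Z\times\Z$-line through a primitive integer vector reducing to its direction, and the three points then lift to collinear points on that $\Z\times\Z$-line. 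Hence $T(\Z_p\times\Z_p)$ equals the maximum number of points of $\mathrm{AG}(2,p)$ with no three collinear.

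Now let $S \subseteq \mathbb{F}_p \times \mathbb{F}_p$ contain no three collinear points and put $k = |S|$. Fixing $P \in S$, the $p+1$ lines through $P$ partition $(\mathbb{F}_p\times\mathbb{F}_p) \setminus \{P\}$, and each meets $S$ in at most one further point, so $k-1 \le p+1$, i.e.\ $k \le p+2$. To sharpen this to $k \le p+1$, suppose $k = p+2$. Then for every $P \in S$ each of the $p+1$ lines through $P$ meets $S$ in exactly one more point, so $S$ has no tangent line: every line of $\mathrm{AG}(2,p)$ meets $S$ in $0$ or $2$ points. Pick any point $Q \notin S$ (possible since $p^2 > p+2$); the $p+1$ lines through $Q$ partition $(\mathbb{F}_p\times\mathbb{F}_p)\setminus\{Q\}$, which contains $S$, so summing $|\ell \cap S|$ over these lines $\ell$ gives $|S| = p+2$ as a sum of $0$'s and $2$'s. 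Hence $p+2$ is even, contradicting the oddness of $p$. Therefore $k \le p+1$.

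I expect the main obstacle to be not the counting but the reduction in paragraph two: verifying carefully that ``three in a line on the torus'' in the sense of Definition~\ref{D:lines} coincides with ``three on a common affine line of $\mathrm{AG}(2,p)$,'' equivalently ``three in a coset of a cyclic subgroup,'' so that the classical arc bound applies. The parity step is the only clever point, and it is also the one place primality is used essentially; for $p = 2$ the bound as stated is actually false --- every line meets $\Z_2 \times \Z_2$ in at most two points and $T(\Z_2 \times \Z_2) = 4$ --- so the statement should be read for odd $p$.
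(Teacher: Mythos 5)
Your argument is correct (for odd $p$) and is essentially the classical arc bound in $\mathrm{AG}(2,p)$; it shares its starting point with the paper's proof --- the identification of the $p+1$ maximal cyclic subgroups with the $p+1$ parallel classes of lines, so that every point lies on exactly $p+1$ lines --- but it goes further where it matters. The paper's proof counts ``new lines'' incrementally ($p+1$, then $p$, then $p-1$, \dots) and stops at the point contributing one new line, asserting that no further point can be added; as written, that only shows a $k$-th point must see $k-1$ distinct used lines among the $p+1$ through it, i.e.\ $k\le p+2$. A hypothetical $(p+2)$-nd point lying on zero \emph{new} lines is not automatically three-in-a-line, since a ``used'' line need only contain one earlier point. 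Your parity step is exactly what closes this gap: if $|S|=p+2$ then every line meets $S$ in $0$ or $2$ points, and summing over the pencil through an external point forces $p+2$ to be even. This is also where primality (oddness, really) enters essentially, and you are right that the statement as printed fails for $p=2$: $T(\Z_2\times\Z_2)=4=p+2$, consistent with the paper's own proposition that $T(\Z_2\times\Z_{2n})=4$. So your proof is not merely an alternative route --- it supplies the step the paper's argument elides, at the cost of the (worthwhile) bookkeeping in your second paragraph matching Definition~\ref{D:lines} with affine collinearity.
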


\begin{proof}
Consider the lines on the $p\times p$ torus. These lines
correspond to cosets of maximal cyclic subgroups of $\Z_p\times
\Z_p$. We claim that there are exactly $p+1$ maximal cyclic subgroups
of $\Z_p\times \Z_p$. To see this, consider the following list of
subgroups:
\[
\{ \langle(0,1)\rangle, \langle(1,0)\rangle, \langle(1,1)\rangle, \langle(1,2)\rangle, \dots, \langle(1,p-1)\rangle\}
\]
We have listed $p+1$ maximal cyclic subgroups of $\Z_p\times
\Z_p$. Moreover, we claim that there are no others. Consider $(a,b)\in
\Z_p\times \Z_p$. If $a$ is zero, then $\langle(a,b)\rangle \leq
\langle(0,1)\rangle$. If $a\ne 0$, then consider the $a$th multiple of
each of the generators above. Since $\Z_p$ is a field, the $a$th
multiple of one of those generators is equal to $(a,b)$, forcing
$\langle(a,b)\rangle$ to be a subset of the maximal cyclic subgroup
generated by that generator.

Since every line is a coset of a maximal cyclic subgroup of
$\Z_p\times \Z_p$, and the cosets of a subgroup partition the group,
every point is in exactly one coset of each maximal cyclic subgroup
listed above. Hence every point on the $p\times p$ torus is contained
in exactly $p+1$ lines.

If we attempt to place points on the $p\times p$ torus such that no
three are in a line, the first point must be on $p+1$ lines, the
second on $p$ lines new lines, the third on $p-1$ new lines, and so on
until the last point which is on just a single new line. No more
points can be placed or we would have three in a line. Hence, at most
$p+1$ points can be placed on a $p\times p$ torus.
\end{proof}

\begin{thm}
For any distinct prime integers $p$ and $q$, $T(\Z_p \times
\Z_{pq})\le p+1$.
\end{thm}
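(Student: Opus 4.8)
The plan is to reduce this to the previous theorem, $T(\Z_p\times\Z_p)\le p+1$, by pushing configurations down along a natural projection. Let $\pi\colon\Z_p\times\Z_{pq}\onto\Z_p\times\Z_p$ be the homomorphism that is the identity on the first factor and reduction modulo $p$ on the second; its kernel $K=\{0\}\times p\Z_{pq}$ is cyclic of order $q$. The structural heart of the argument is the claim that the lines on the $\Z_p\times\Z_{pq}$ torus---that is, the cosets of its maximal cyclic subgroups---are exactly the $\pi$-preimages of the lines on the $\Z_p\times\Z_p$ torus. Granting this, the theorem follows immediately from the previous one.

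To prove the claim I would first classify the maximal cyclic subgroups of $G=\Z_p\times\Z_{pq}$. By the Chinese remainder theorem $G\iso\Z_p\times\Z_p\times\Z_q$, so every element of $G$ has order dividing $pq$ and hence every maximal cyclic subgroup has order $pq$. A generator of such a subgroup must project nontrivially to both the $\Z_p\times\Z_p$ factor and the $\Z_q$ factor; writing $L$ for the order-$p$ subgroup of $\Z_p\times\Z_p$ generated by its $\Z_p\times\Z_p$-component (one of the $p+1$ lines through the origin), a short check gives that the cyclic subgroup it generates is $L\oplus K$. As $L$ runs over those $p+1$ lines this produces all $p+1$ maximal cyclic subgroups of $G$, and the coset $g+(L\oplus K)$ is precisely $\pi^{-1}$ of the affine line $\pi(g)+L$. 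In particular each element of $G$ lies on exactly $p+1$ lines, in parallel with the $p\times p$ case; and $\pi^{-1}(L)=L\oplus K$, being abelian of squarefree order $pq$, is cyclic---which is the real reason the preimage of a line is again a line.

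Now let $S\subseteq G$ have no three elements in a common coset of a cyclic subgroup, and ask whether $\pi$ is injective on $S$. If it is not, choose distinct $a,b\in S$ with $\pi(a)=\pi(b)$. The fiber $\pi^{-1}(\pi(a))$ is a single coset of the cyclic subgroup $K$, so it contains no third point of $S$; and if some $c\in S$ lay in a different fiber, then $a,b,c$ would all lie in $\pi^{-1}$ of the line joining $\pi(a)$ to $\pi(c)$---a coset of a maximal cyclic subgroup---forbidden. Hence $S$ is contained in one fiber and $|S|\le 2\le p+1$. If instead $\pi$ is injective on $S$, then $\pi(S)$ can contain no three collinear points of $\Z_p\times\Z_p$, for such a triple would lift to three distinct points of $S$ in the $\pi$-preimage of that line, again forbidden; so $\pi(S)$ is a legal configuration and $|S|=|\pi(S)|\le T(\Z_p\times\Z_p)\le p+1$.

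The main obstacle is the structural claim, since a projection can in general both merge points of $S$ and create collinear triples that were absent upstairs. Here the merging is harmless because every fiber is a coset of the cyclic subgroup $K$, so it holds at most two points of $S$; and, more to the point, no spurious collinear triples arise in $\pi(S)$ because such a triple would sit inside $\pi^{-1}(\ell)$, a coset of the cyclic subgroup $\pi^{-1}(L)$, and so would already have been forbidden in $S$. This last point relies on $\pi^{-1}(L)$ having squarefree order $pq$; it is exactly what fails for $\Z_p\times\Z_{p^2}$, where $\pi^{-1}(L)$ has order $p^2$, is isomorphic to $\Z_p\times\Z_p$ with its own internal lines, and the bound jumps to $2p$. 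The remaining work---the order count classifying the maximal cyclic subgroups, and the coset bookkeeping---is routine.
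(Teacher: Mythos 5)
Your proof is correct, and while it rests on the same structural fact as the paper's argument, it reaches the conclusion by a genuinely different route. The paper also classifies the $p+1$ maximal cyclic subgroups of $\Z_p\times\Z_{pq}$, but does so by exhibiting explicit generators $(0,1),(1,1),\dots,(p-1,1),(1,p)$ and then finishes by ``working as in the proof of the previous theorem,'' i.e.\ by repeating the incidence count (first point on $p+1$ lines, second on $p$ new lines, and so on). You instead describe the maximal cyclic subgroups intrinsically as $\pi^{-1}(L)=L\oplus K$ for the reduction map $\pi\colon\Z_p\times\Z_{pq}\to\Z_p\times\Z_p$ with cyclic kernel $K$ of order $q$, and then deduce the bound formally from the already-proved inequality $T(\Z_p\times\Z_p)\le p+1$ by splitting on whether $\pi$ is injective on the configuration. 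Your version buys two things. First, it makes explicit why the reduction works here but fails for $\Z_p\times\Z_{p^2}$ (where $\pi^{-1}(L)$ has order $p^2$ and is not cyclic), which the paper leaves implicit. Second, it repairs a wrinkle in the paper's ``work as before'' step: since $K$ is the unique subgroup of order $q$, it is contained in \emph{every} maximal cyclic subgroup, so two points whose difference lies in $K$ are simultaneously on all $p+1$ lines and the claim that the second point contributes exactly $p$ new lines is false in that case; your non-injective case isolates exactly this degeneracy and shows it caps the configuration at $2$ points. The paper's proof is shorter and more concrete; yours is more robust and turns the $p\times p$ theorem into a genuine black box.
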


\begin{proof}
Again, the ``lines'' on the $p\times pq$ torus correspond to cosets of
maximal cyclic subgroups of $\Z_p\times \Z_{pq}$. We claim that there
are exactly $p+1$ maximal cyclic subgroups of $\Z_p\times
\Z_{pq}$. Consider the following list of subgroups:
\[
\{ \langle(0,1)\rangle, \langle(1,1)\rangle, \langle(2,1)\rangle, \dots, \langle(p-1,1)\rangle, \langle (1, p)\rangle\}
\]
We claim that these $p+1$ subgroups are all of the maximal cyclic
subgroups of $\Z_p\times \Z_{pq}$. Consider $(a,b)\in \Z_p\times
\Z_{pq}$. If $a$ is zero, then $\langle(a,b)\rangle \leq
\langle(0,1)\rangle$. If $a\ne 0$, and $p\nmid b$, then consider the
$b$th multiple of each of the generators above. Since $\Z_p$ is a
field, the $b$th multiple of one of those generators is equal to
$(a,b)$. If $p|b$ then consider the $ib/p$th multiples of $(1,p)$
where $i\in\{0,\dots, p-1\}$. Since $(p,q)= 1$, we see that
$\langle(a,b)\rangle$ to be a subset of the maximal cyclic subgroup
generated by $(1,p)$. Working as in the proof of the previous theorem
we see that at most $p+1$ points can be placed on a $p\times pq$
torus.
\end{proof}

\subsection{Constructions}

A construction originally given by Erd\"os in \cite{kR51} shows that
if $p$ is prime, we may place $p$ points on a $p\times p$ discrete
torus such that no three are in a line.  To see this, recall the
determinant criterion for checking whether points are in a line:

\begin{lem}\label{L:det}
Three points $(x_1,y_1)$, $(x_2,y_2)$, and $(x_3,y_3)$ are in line if
and only if
\[
\det \begin{bmatrix}
1 & 1 & 1\\
x_1 & x_2 & x_3 \\
y_1 & y_2 & y_3
\end{bmatrix} =0.
\]
\end{lem}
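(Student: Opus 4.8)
The plan is to reduce the $3\times3$ determinant to a $2\times2$ one by elementary column operations and then to recognize the resulting condition as the parallelism of two displacement vectors. Subtracting the first column of the matrix in Lemma~\ref{L:det} from each of the other two columns and expanding along the top row yields
\[
\det \begin{bmatrix} 1 & 1 & 1 \\ x_1 & x_2 & x_3 \\ y_1 & y_2 & y_3 \end{bmatrix}
= \det \begin{bmatrix} x_2 - x_1 & x_3 - x_1 \\ y_2 - y_1 & y_3 - y_1 \end{bmatrix},
\]
so it suffices to show that the three points are in a line exactly when the vectors $u = (x_2-x_1,\,y_2-y_1)$ and $v = (x_3-x_1,\,y_3-y_1)$ are linearly dependent.

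For that equivalence I would argue both directions directly. If the points lie on a common line, that line has the form $\{(x_1,y_1)+t\,w\}$ for some direction $w\neq 0$, so $u$ and $v$ are each scalar multiples of $w$ and hence dependent; thus the $2\times2$ determinant vanishes. Conversely, if the determinant vanishes then $u$ and $v$ are linearly dependent, and---for three distinct points---$u\neq 0$, so $v=\lambda u$ for some scalar $\lambda$ and all three points lie on the line $\{(x_1,y_1)+t\,u\}$. The degenerate configurations (two or more of the points coinciding) pose no real difficulty: the matrix then has a repeated column, its determinant is $0$, and the ``three distinct points'' hypothesis of Definition~\ref{D:lines} is not met, so there is nothing to check.

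An equivalent and perhaps cleaner packaging---one that also makes transparent that the criterion survives the base change to the field $\Z_p$ used in the constructions---is to note that the determinant of the displayed matrix (equal to that of its transpose) vanishes precisely when the homogeneous system $c + a x_i + b y_i = 0$ for $i=1,2,3$ has a nonzero solution $(c,a,b)$; since $(a,b)=(0,0)$ would force $c=0$ as well, any such solution has $(a,b)\neq(0,0)$, i.e.\ $\{(x,y):ax+by=-c\}$ is a genuine line through all three points, and conversely any line through the three points supplies such a solution. I do not anticipate a genuine obstacle here; the only point to keep straight is the ring in which the coordinates are taken, and for the torus one applies the computation to a collinear lift in $\Z\times\Z$ as in Definition~\ref{D:lines}.
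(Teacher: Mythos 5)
Your proof is correct. Note that the paper does not actually prove Lemma~\ref{L:det} at all --- it is simply ``recalled'' as a known criterion --- so there is no argument to compare against; your column-reduction to the $2\times2$ displacement determinant, together with the alternative reading via the homogeneous system $c+ax_i+by_i=0$, is the standard justification and fills the gap cleanly. Your closing remark about applying the criterion to a collinear lift in $\Z\times\Z$ (per Definition~\ref{D:lines}) is exactly the way the lemma is used in the subsequent determinant computations modulo $p$, so that observation is apt rather than superfluous.
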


Using this lemma we will adapt the proof given in \cite{AHK74} to
prove the following theorem.

\begin{thm} 
\label{thm:erdos-analogue}
Given a prime $p$ and the discrete torus $\Z_{p}\times \Z_{p}$,
there are $p$ points none of which are three-in-line.
\end{thm}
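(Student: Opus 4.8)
The plan is to use the ``parabola modulo $p$'' construction of Erd\"os mentioned in the introduction, and to verify the no-three-in-line condition by exploiting the Vandermonde structure of the determinant from Lemma~\ref{L:det}. Concretely, I would place the $p$ points
\[
\mathcal{P} = \{\, (x,\, x^2 \bmod p) : x = 0, 1, \dots, p-1 \,\},
\]
noting these are genuinely $p$ distinct points of $\Z_p \times \Z_p$ because their first coordinates are pairwise distinct.

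Next, suppose toward a contradiction that three of these points, corresponding to parameters $x_1, x_2, x_3 \in \{0,\dots,p-1\}$ (necessarily pairwise distinct), lie in a line on the torus. By Definition~\ref{D:lines} there exist points $A, B, C$ in a line in the universal cover $\Z \times \Z$ with $A \equiv (x_1, x_1^2)$, $B \equiv (x_2, x_2^2)$, and $C \equiv (x_3, x_3^2) \bmod p$. Writing $A = (X_1, Y_1)$, $B = (X_2, Y_2)$, $C = (X_3, Y_3)$, Lemma~\ref{L:det} gives that the integer determinant
\[
\det \begin{bmatrix} 1 & 1 & 1 \\ X_1 & X_2 & X_3 \\ Y_1 & Y_2 & Y_3 \end{bmatrix}
\]
is zero. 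Reducing this equation modulo $p$ and using $X_i \equiv x_i$, $Y_i \equiv x_i^2$, one obtains
\[
\det \begin{bmatrix} 1 & 1 & 1 \\ x_1 & x_2 & x_3 \\ x_1^2 & x_2^2 & x_3^2 \end{bmatrix} \equiv 0 \pmod{p}.
\]

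The final step is to recognize this last matrix as a Vandermonde matrix, whose determinant factors as $(x_2 - x_1)(x_3 - x_1)(x_3 - x_2)$. Since $\Z_p$ is a field---here one only needs that it is an integral domain---and $x_1, x_2, x_3$ are pairwise distinct residues, this product is nonzero modulo $p$, a contradiction. Hence no three points of $\mathcal{P}$ are in a line, and $\mathcal{P}$ witnesses the claimed bound.

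I do not expect a serious obstacle here. The only point requiring a little care is that Definition~\ref{D:lines} demands ruling out \emph{every} choice of collinear lifts, not merely one particular choice; but this is automatic, since reducing the identically-zero integer determinant modulo $p$ yields the same conclusion no matter which lifts $A, B, C$ are chosen. Note also that only the ``easy'' implication of the torus-line criterion is used---that a line upstairs forces the determinant to vanish mod $p$---so there is no need to establish the converse that a vanishing determinant mod $p$ arises from an actual line on the torus.
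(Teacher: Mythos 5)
Your proposal is correct and follows essentially the same route as the paper: the same parabola $\{(x, x^2 \bmod p)\}$, the same appeal to Lemma~\ref{L:det} on integer lifts, and the same conclusion that the determinant reduces mod $p$ to the Vandermonde product $(x_2-x_1)(x_3-x_1)(x_3-x_2)\not\equiv 0$. The only cosmetic difference is that you reduce the vanishing integer determinant modulo $p$ and derive a contradiction, whereas the paper expands the lifted determinant as $(y-x)(x-z)(y-z)+p(\text{other terms})$ and observes directly that it is not divisible by $p$; these are the same computation.
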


\begin{proof} 
Consider the set of points:
\[
\{(x, x^2 \bmod p):x=0,\dots,p-1\}
\]
By Lemma~\ref{L:det} we should examine the following determinant
\[
\det \begin{bmatrix}
1 & 1 & 1\\
x + pa & y+pb & z+pc \\
x^2+pi & y^2+pj & z^2+pk
\end{bmatrix} 
\]
which equals
\[
(y-x)(x-z)(y-z) + p(\text{other terms}).
\]
The first term is nonzero and not divisible by $p$ because $x$, $y$,
and $z$ are distinct elements of $\{0,\dots,p-1\}$. Thus the
determinant in question is neither nonzero nor is it divisible by
$p$. Thus we have shown that $p$ points can be placed on the $p\times
p$ discrete torus with no three-in-line.
\end{proof}

The construction above places $p$ points on either the discrete torus
or lattice. However, in neither case is the construction maximal. The
following constructions are all maximal.

\begin{thm}
For any prime integer $p$, $T(\Z_p \times \Z_{p^2}) = 2p$.
\end{thm}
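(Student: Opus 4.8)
The plan is to establish the two inequalities separately: the upper bound $T(\Z_p \times \Z_{p^2}) \le 2p$ and a matching construction of $2p$ points with no three in a line.

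For the upper bound, I would follow the counting strategy used in the proofs of the previous two theorems. First, classify the maximal cyclic subgroups of $\Z_p \times \Z_{p^2}$. Every maximal cyclic subgroup has order $p^2$ (the exponent of the group), and one can list them by their images under the two projections; I expect there to be $p^2 + p$ of them, grouped so that each element of $\Z_p \times \Z_{p^2}$ lies in a controlled number of them. The key is to count, for a fixed point, how many maximal cyclic cosets pass through it, and more importantly how the "new lines used up" bookkeeping works when points are added one at a time. Because the group is no longer a vector space over $\Z_p$, two points need not determine a unique line through them, so the naive "$p+1, p, p-1, \dots$" argument does not transfer directly; instead I would track, after $k$ points have been placed, a lower bound on the number of lines that are "blocked" (i.e., already contain two of the chosen points, so no third may be added), and show this count forces $k \le 2p$. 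The arithmetic of how fast blocked lines accumulate — using that a line of order $p^2$ meets the $\Z_p$-factor structure in cosets of size $p$ — is where the bound $2p$ rather than $p+1$ should emerge.

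For the lower bound, I would give an explicit set of $2p$ points. A natural candidate is to take two "parallel" copies of an Erd\H{o}s-type parabola: for instance points of the form $(x, x^2 \bmod{p^2})$ for $x = 0, \dots, p-1$ together with a shifted family $(x, x^2 + c \bmod{p^2})$ or $(x, p\cdot x^2 + \text{something})$ for a suitable constant, chosen so that the two blocks lie in distinct cosets of the subgroup $p\Z_{p^2}$ in the second coordinate. Verification is via the determinant criterion of Lemma~\ref{L:det}, lifted to $\Z \times \Z$ as in the proof of Theorem~\ref{thm:erdos-analogue}: for any three chosen points I must show the lifted determinant is nonzero modulo $p^2$. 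Within a single block this is the Erd\H{o}s computation. Across blocks one gets an extra factor of $p$ from the coordinate shift, so one must check the determinant is not merely nonzero mod $p$ but genuinely a nonzero residue mod $p^2$ — this requires choosing the shift constant coprime to $p$ and doing the $2$-vs-$1$ and $1$-vs-$2$ split computations carefully.

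The main obstacle will be the upper bound, specifically making the line-counting argument honest in a non-field setting: unlike $\Z_p \times \Z_p$, here a pair of points can lie on several maximal cyclic cosets simultaneously (or on none, if their difference is not primitive), so I cannot simply say "each new point kills $p+1, p, \dots$ lines." I expect to handle this by projecting to $\Z_p \times \Z_p$ (a quotient), using the already-proved bound $T(\Z_p \times \Z_p) \le p+1$ there, and analyzing the fibers: points of $\Z_p \times \Z_{p^2}$ mapping to the same point of $\Z_p \times \Z_p$ differ by an element of the subgroup $\{0\} \times p\Z_{p^2} \cong \Z_p$, and any three such collinear-in-fiber points would be three in a line, limiting each fiber to $2$ points — combining "$\le 2$ per fiber" with "$\le p+1$ fibers hit" gives $2(p+1)$, which overshoots, so a more refined argument pairing the fiber constraint with the parabola-type obstruction in the base is needed to sharpen $2(p+1)$ down to $2p$.
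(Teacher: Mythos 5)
Your proposal has a genuine gap on the upper-bound side, and it is an instructive one: you overlook that the bound $T(\Z_p\times\Z_{p^2})\le 2p$ is trivial. The subgroup $\langle(0,1)\rangle=\{0\}\times\Z_{p^2}$ is cyclic, so each of its $p$ cosets (the ``columns'' $\{a\}\times\Z_{p^2}$) is a line, and a valid configuration contains at most $2$ points in each; hence at most $2p$ points in total. This is exactly the ``easy $2n$ bound'' the paper alludes to at the start of its upper-bound section, and it is why the paper's proof of this theorem consists only of the construction. Your proposed alternative---projecting to $\Z_p\times\Z_p$, bounding fibers by $2$ and fibers hit by $p+1$---both overshoots to $2(p+1)$ (as you concede, without saying how to repair it) and rests on an unjustified step: three points of $\Z_p\times\Z_{p^2}$ whose images are collinear in the quotient need not be collinear upstairs (the quotient map goes the wrong way for transporting the no-three-in-line condition), so ``at most $p+1$ fibers are hit'' does not follow from $T(\Z_p\times\Z_p)\le p+1$. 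The elaborate blocked-line bookkeeping in your first paragraph is likewise unnecessary.

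On the construction, your plan is essentially the paper's: two Erd\H{o}s-type parabola blocks, verified by lifting the determinant criterion of Lemma~\ref{L:det} to $\Z\times\Z$ and splitting into within-block and cross-block cases. But you hedge between candidates, and the choice matters. The paper takes $X=\{(x,px^2 \bmod p^2)\}$ together with a $180$-degree rotation of it; the factor $p$ in the second coordinate is essential, since it makes the within-block determinant equal to $-p(x-y)(x-z)(y-z)+p^2(\cdots)$, visibly nonzero mod $p^2$, and makes the cross-block determinant reduce mod $p$ to a single nonzero difference. Your variant $\{(x,px^2)\}\cup\{(x,px^2+c)\}$ with $\gcd(c,p)=1$ also works (the cross-block determinant is $\equiv \pm c\,(y-x)\not\equiv 0 \pmod p$, and disjointness is immediate), and is if anything cleaner than the paper's rotation. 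Your other candidate, $\{(x,x^2)\}$ plus a shift, is not obviously salvageable: there the cross-block determinant reduces mod $p$ to $(y-x)\bigl[(z-x)(z-y)+c\bigr]$, which can vanish for suitable $x,y,z$, and then no congruence obstruction remains to rule out an exactly-zero lift. So: replace your upper-bound paragraph with the one-sentence column count, commit to the $px^2$ form of the parabola, and carry out the two determinant computations; that yields the paper's proof.
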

\begin{proof}
The proof uses a construction similar to Erd\"os' construction for the
$p \times p$ lattice. Consider the set of points:
\[
X=\{(x, p x^2 \bmod{p^2}):x=0,\dots,p-1\}
\]
along with
\[
Y=\{(p-x-1, -px^2-1,\bmod{p^2}):x=0,\dots,p-1\}
\]
Here $Y$ is essentially an $180$ degree rotation of the points in
$X$. We claim that together these sets produce $p-1$ points where no
three are in a line. First we must argue that these sets are disjoint.

Seeking a contradiction, suppose $X\cap Y \neq \emptyset$, then plugging the
first entry of a point of $Y$ into the formula for the second
entry of a point of $X$ will equal the second entry of a
point of $Y$. Writing this out:
\begin{align*}
p(p-x-1)^2 &\equiv -px^2 - 1 \bmod{p^2} \\
px^2 +2px + p&\equiv -px^2 - 1 \bmod{p^2} \\
2px^2 + 2px +p +1 &\equiv 0 \bmod{p^2} 
\end{align*}
However, multiplying both sides by $p$, we see this to be impossible.

Now we claim that no three of the $2p$ points of $X\cup Y$ are in a
line. If there were three points in a line, then either all of those
points are from $X$, all are from $Y$, or two are from one set and the
third is from the other set. Since $Y$ is merely a $180$ degree
rotation of the first set of points, we can work as we did before and
examine the following determinant:
\[
\det
\begin{bmatrix}
1           & 1            & 1 \\
x+ap        & y+bp         & z+cp \\
px^2+ ip^2  & py^2+jp^2    & pz^2+kp^2 \\
\end{bmatrix}
\]
On the other hand, if one point is from $Y$ and two points are from
$X$, or vice versa, we examine this determinant.
\[
\det
\begin{bmatrix}
1           & 1            & 1 \\
p-x-1+ap        & y+bp         & z+cp \\
-px^2-1+ ip^2  & py^2+jp^2    & pz^2+kp^2 \\
\end{bmatrix}
\]
By symmetry, these two determinants are sufficient to account for all
cases.  The first determinant above is equal to:
\[
-p(x-y)(x-z)(y-z)+p^2(\text{other terms})
\]
Since $x$, $y$, and $z$ are distinct elements of $\{0\dots ,p-1\}$
this first term must be nonzero and not divisible by $p^2$.  Thus the
determinant is nonzero.  The other determinant equals
\[
(y-z) + p(\text{other terms})
\]
which by the same logic is also nonzero.  Thus we have shown that $2p$
points can be placed on a the $p \times p^2$ torus.
\end{proof}

Next we give construction for placing $p+1$ points on a
$p\times p$ torus.

\begin{thm} 
For any prime $p$,
  \[
  T(\Z_p \times \Z_p) = p+1.
  \]
\end{thm}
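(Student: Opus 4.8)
We already have the upper bound $T(\Z_p \times \Z_p) \le p+1$, so the task is to exhibit $p+1$ points of $\Z_p \times \Z_p$ with no three in a line. The plan is to take the points of a suitable affine conic over the field $\Z_p$; assume $p$ is odd. First I would record the (straightforward) fact that ``three points in a line on the $p\times p$ torus'' means exactly ``three points collinear over $\Z_p$.'' Indeed, if three distinct points $a,b,c$ of $\Z_p \times \Z_p$ lie in a line on the torus, then choosing collinear lifts $A,B,C \in \Z \times \Z$ and applying Lemma~\ref{L:det} gives a vanishing determinant which, reduced modulo $p$, shows the coordinates of $a,b,c$ satisfy
\[
\det\begin{bmatrix} 1 & 1 & 1 \\ x_a & x_b & x_c \\ y_a & y_b & y_c \end{bmatrix} \equiv 0 \pmod{p};
\]
conversely, if this congruence holds then $a,b,c$ lie on a common coset of a cyclic subgroup of $\Z_p \times \Z_p$, which (by lifting a direction vector and a parameter for each point to $\Z \times \Z$) is a line on the torus in the sense of Definition~\ref{D:lines}. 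So it suffices to find $p+1$ points of $\Z_p^2$, no three of which are $\Z_p$-collinear.

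Next I would fix a quadratic non-residue $\epsilon \in \Z_p^\times$ and set
\[
C = \{(x,y) \in \Z_p \times \Z_p : x^2 - \epsilon y^2 \equiv 1 \pmod{p}\}.
\]
To see that $|C| = p+1$: since $\epsilon$ is a non-residue, $\Z_p[\sqrt{\epsilon}]$ is the field with $p^2$ elements, and $x^2 - \epsilon y^2$ is the norm of $x + y\sqrt{\epsilon}$; the norm map onto $\Z_p^\times$ has kernel of size $(p^2-1)/(p-1) = p+1$, so exactly $p+1$ pairs $(x,y)$ have norm $1$. (Alternatively, the chords $y = t(x-1)$, $t \in \Z_p$, through the point $(1,0)\in C$ sweep out the remaining $p$ points bijectively, because $\epsilon^{-1}$ is also a non-residue and so the denominators that appear never vanish.)

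Then I would argue that no three points of $C$ are $\Z_p$-collinear. Because $\epsilon$ is a non-residue, the polynomial $x^2 - \epsilon y^2 - 1$ is irreducible over $\Z_p$, so $C$ contains no line over $\Z_p$. Parametrizing an arbitrary line over $\Z_p$ as $\{(a + ct,\, b + dt) : t \in \Z_p\}$ with $(c,d) \ne (0,0)$ and substituting into $x^2 - \epsilon y^2 - 1$ gives a polynomial in $t$ whose leading coefficient $c^2 - \epsilon d^2$ is nonzero — again since $\epsilon$ is a non-residue — hence of degree exactly $2$, so the line meets $C$ in at most $2$ points. Thus $C$ is a set of $p+1$ points with no three in a line on the torus, and combined with the upper bound this gives $T(\Z_p \times \Z_p) = p+1$.

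The step I expect to demand the most care is the translation in the first paragraph — making sure that the ``wrap-around'' lines through three chosen points on the torus are precisely the lines over $\Z_p$, so that nothing is lost or gained passing between $\Z \times \Z$ and $\Z_p \times \Z_p$ — which is really the same reduction already used in the proof of Theorem~\ref{thm:erdos-analogue}. The only other wrinkle is that the construction requires an odd prime, so $p = 2$ should be checked directly.
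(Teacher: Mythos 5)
Your proposal is correct and follows essentially the same route as the paper: both take the norm-one conic of the quadratic extension $\Z_p[\sqrt{\epsilon}]$ (the paper writes it as $x^2 + q\,y^2 = 1$ for a nonresidue $q$), count its $p+1$ points via surjectivity of the norm map on the cyclic group $k^\times$, and bound line intersections by the degree of the conic. The only differences are cosmetic — you replace the paper's appeal to B\'ezout with a direct substitution showing the restricted polynomial has degree exactly $2$, and you spell out the reduction from torus lines to $\Z_p$-collinearity that the paper leaves implicit — so no further changes are needed.
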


\begin{proof} 
  Since $T(\Z_p \times \Z_p) \leq p+1$, constructing an arrangement of
  $p+1$ points will suffice to prove the theorem.  The construction
  relies on counting points on spheres for quadratic forms over finite
  fields, for which we referred to Cassleman's survey \cite{casselman}
  of Minkowski's counting arguments \cite{minkowski}.

  If $p = 2$, any configuration of 3 points works.  For $p > 2$, we
  begin by choosing a quadratic nonresidue $q$.  Regarding $\Z_p
  \times \Z_p$ as the affine plane over the finite field $\Z_p$, the
  variety
  \[
  V := \{ (x,y) \in {\Z_p}^2 : x^2 + q \, y^2 = 1 \}.
  \]
  is an absolutely irreducible degree two hypersurface; if it were
  reducible over the algebraic closure $\overline{\Z_p}$, the
  irreducible components of the projective closure of $\Spec
  \overline{\Z_p}[x,y]/(x^2 + q\, y^2 - 1)$ would intersect by B\'ezout,
  giving a singular point, but the homogeneous polynomial $x^2 + q\, y^2 -
  z^2$ has partial derivatives which simultaneously vanish only at
  $(0,0,0)$, so the projective closure is nonsingular.

  Also by B\'ezout's theorem, any line (a degree one hypersurface)
  intersects $V$ in at most two points.  In other words, $V$ satisfies
  the no-three-in-line condition.  It remains to count the points on
  $V$.  Define the finite field extension $k := \Z_p[t]/(t^2 - q)$
  having $p^2$ elements, and consider the norm map $N : k \to \Z_p$.
  Regarding $k$ as a two-dimensional vector space over $\Z_p$, we may
  identify $V$ with the preimage $N^{-1}(1)$.

  Let $F : k \to k$ be the Frobenius; then
  \[
  N(x) = x \cdot F(x) = x^{1+p},
  \]
  and the units $k^{\times}$ is a cyclic group, so $N$ is surjective.
  Because $N$ is a group homomorphism on nonzero elements, the fiber
  over each nonzero element of $\Z_p$ has the same number of elements,
  so the fiber has size $(p^2 - 1)/(p-1) = p+1$.  This $V$ has $p+1$
  points, as desired.
\end{proof}

\begin{thm} 
For distinct odd primes $p$ and $q$, $T(\Z_p \times \Z_{pq})= p+1$.
\end{thm}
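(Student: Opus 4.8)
The upper bound $T(\Z_p \times \Z_{pq}) \le p+1$ was established above, so the plan is only to produce $p+1$ points on $\Z_p \times \Z_{pq}$ with no three in a coset of a cyclic subgroup. The idea I would use is to transport the $(p+1)$-point configuration on $\Z_p \times \Z_p$ coming from the preceding theorem across the covering projection
\[
  \pi \colon \Z_p \times \Z_{pq} \longrightarrow \Z_p \times \Z_p,
  \qquad \pi(a,b) = (a,\, b \bmod p),
\]
which is a surjective group homomorphism and therefore carries each coset of a cyclic subgroup of $\Z_p \times \Z_{pq}$ onto a coset of a cyclic subgroup of $\Z_p \times \Z_p$.

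First I would fix a set $S = \{(a_1,b_1),\dots,(a_{p+1},b_{p+1})\} \subseteq \Z_p \times \Z_p$ with no three points in a line; this exists because $p$ is odd, by the earlier computation of $T(\Z_p\times\Z_p)$. Next, for each $i$ I would choose an \emph{arbitrary} lift $\tilde b_i \in \Z_{pq}$ with $\tilde b_i \equiv b_i \pmod p$ and set $\tilde S = \{(a_i,\tilde b_i) : 1 \le i \le p+1\}$. Since $\pi$ sends $(a_i,\tilde b_i)$ to $(a_i,b_i)$ and the $p+1$ points of $S$ are distinct, $\tilde S$ has $p+1$ distinct points and $\pi$ restricts to a bijection $\tilde S \to S$. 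Finally I would check that no three points of $\tilde S$ lie in a line: if three of them lay in a coset $v + H$ of a cyclic subgroup $H \le \Z_p \times \Z_{pq}$, then their $\pi$-images would lie in the coset $\pi(v) + \pi(H)$ of the cyclic subgroup $\pi(H) \le \Z_p \times \Z_p$, and these three images are distinct because $\pi$ is injective on $\tilde S$. Hence $\pi(v)+\pi(H)$ has at least three elements, so $\pi(H)$ has order greater than $1$; since every cyclic subgroup of $\Z_p\times\Z_p$ has order $1$ or $p$, the subgroup $\pi(H)$ is one of the $p+1$ maximal cyclic subgroups, placing three distinct points of $S$ in a line and contradicting the choice of $S$. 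Thus $\tilde S$ is the desired configuration and $T(\Z_p \times \Z_{pq}) = p+1$.

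There is no real obstacle: the whole content is that a group projection sends cyclic cosets to cyclic cosets, together with the elementary fact that a cyclic subgroup of $\Z_p \times \Z_p$ large enough to carry three collinear points is automatically maximal. The only point that needs care is keeping the three image points distinct, which is precisely why each point of $S$ is lifted exactly once. If one prefers an explicit construction in the style of the earlier parabola arguments, one can instead use $\Z_p \times \Z_{pq} \cong \Z_p \times \Z_p \times \Z_q$, take $S$ to be the ellipse $x^2 + \nu y^2 = 1$ for a quadratic nonresidue $\nu$ modulo $p$, and lift by appending an arbitrary third coordinate in $\Z_q$; the determinant criterion of Lemma~\ref{L:det}, read modulo $p$, then reduces to the same $p+1$-point count on the conic. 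I would also note in passing that this construction actually gives $T(\Z_p \times \Z_{pm}) \ge p+1$ for every odd prime $p$ and every $m \ge 1$; primality of $q$ and $q \ne p$ enter only through the matching upper bound.
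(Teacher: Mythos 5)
Your argument is correct, and it takes a genuinely different route from the paper. The paper proves the lower bound by an explicit construction: the points $X=\{(q x^2 \bmod p,\; p x^4 \bmod pq)\}$ together with a $180^\circ$-rotated copy $Y$, followed by a disjointness check and two determinant computations (mod $p^2$ and mod $p$) in the style of the Erd\H{o}s parabola argument. You instead pull the $(p+1)$-point conic configuration on $\Z_p\times\Z_p$ back through the reduction homomorphism $\pi\colon \Z_p\times\Z_{pq}\to\Z_p\times\Z_p$, using only the facts that a homomorphism carries a coset of a cyclic subgroup onto a coset of a cyclic subgroup and that $\pi$ is injective on your lifted set, so three collinear lifts would force three distinct collinear points downstairs. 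This is in effect a careful proof of the paper's own earlier Proposition that $T(\Z_m\times\Z_m)\le T(\Z_m\times\Z_{mn})$, specialized to $m=p$, $n=q$, and combined with $T(\Z_p\times\Z_p)=p+1$ and the matching upper bound; the paper states that Proposition but does not invoke it here. What each approach buys: yours is shorter, avoids the algebraic verification entirely (a real advantage, since the paper's computations are delicate), and, as you note, yields $T(\Z_p\times\Z_{pm})\ge p+1$ for every $m\ge 1$ with no extra work; the paper's approach produces a concrete, formulaic point set on $\Z_p\times\Z_{pq}$ itself, which is useful for the computational experiments in Section~\ref{S:ComAlg}. One small remark: you do not actually need the observation that $\pi(H)$ is maximal --- three distinct points of $S$ in a coset of \emph{any} cyclic subgroup already contradicts the defining property of $S$ --- but including it does no harm.
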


\begin{proof}
The proof uses a similar construction to the one used for the $p
\times p^2$ torus. Consider the set of points:
\[
X=\{(q x^2 \bmod p, p x^4 \bmod{p q}):x=0,\dots,(p-1)/2\}
\]
along with
\[
Y=\{(p-1)/2-q x^2 \bmod p, q (p-1)^2/4 - p x^4 \bmod{p q}):x=0,\dots,(p-1)/2\}
\]
Again, points in $Y$ are essentially an $180$ degree rotation of the
points in $X$. First we must show that $X\cap Y = \emptyset$.  Suppose
that $X\cap Y \neq \emptyset$, then for some values of $x$ and $y$,
\[
q x^2 \equiv \frac{p-1}{2} - q y^2 \pmod{p} \qquad \Rightarrow \qquad x^2 \equiv \frac{p-1}{2q} - y^2 \pmod{p}
\]
and
\[
p x^4 \equiv \frac{q (p-q)^2}{4} - py^4 \pmod{pq}
\]

Combining the equations above:
\[
p\left(\frac{p-1}{2q} - y^2\right)^2 \equiv \frac{q(p-1)^2}{4} - py^4 \pmod{pq}
\]
Multiplying by $q$:
\begin{align*}
0 &\equiv \frac{q^2(p-q)^2}{4} + q \pmod{pq} \\
0 &\equiv q^2 + 4q \pmod{pq}
\end{align*}
which, is impossible.

Again we claim that no three of the $p+1$ points of $X\cup Y$ are in a
line and we examine the following determinant:
\[
\det
\begin{bmatrix}
1           & 1             & 1 \\
q x^2+ap    & q y^2+bp      & q z^2+cp \\
p x^4+ipq   & p y^4+jpq     & p z^4+kpq \\
\end{bmatrix}
\]
On the other hand, if one point is from $Y$ and two points are from
$X$, or vice versa, we examine this determinant.
\[
\det
\begin{bmatrix}
1                       & 1             & 1 \\
(p-1)/2-ap-qx^2         & q y^2+bp      & q z^2+cp \\
q(p-1)^2/4-ipq-px^4   & p y^4+jpq     & q z^4+kpq \\
\end{bmatrix}
\]
By symmetry, these two determinants are sufficient to account for all
cases.  The first determinant above is equal to:
\[
p^2(c-b)x^4+(a-c)y^4+(b-a)z^4-pq(x-y)(x+y)(x-z)(y-z)(x+z)(y+z)
\]
Here we choose to work mod $p^2$, allowing us to ignore the first term 
at the expense of potential roots.  Thus we need only show:
\[
pq(x-y)(x+y)(x-z)(y-z)(x+z)(y+z)
\]
cannot be zero. As above, since $x$, $y$, and $z$ are distinct elements 
of $\{0,\dots,(p-1)/2\}$ none of the differences can be zero. Thus the
determinant is nonzero.  The other determinant equals
\[
-q^2(y-z)(y+z)/4 + p(\text{other terms})
\]
which by the same logic is also nonzero (mod $p$).  Thus we have shown that $p+1$
points can be placed on a the $p \times p q$ torus.
\end{proof}

\section{Commutative algebra}
\label{S:ComAlg}

Before we found the upper-bounds and constructions described above,
our work on this problem was mostly computer-based. However our
approach was somewhat different than what was done in \cite{CHJ76,
  kT78, kT79,aF92,aF98}. Since we did not know have upper bounds for
the number of points that could be placed on an $n\times m$ discrete
torus with no-three-in-line, we could not search for solutions and
stop when a maximal solution was found. To remedy this, we used the
tools of commutative ring theory.  Let $K$ be a field and consider the
polynomial ring:
\[
K[x_{1,1},\dots,x_{n,n}]
\]
By thinking of each indeterminate $x_{i,j}$ as the point $(i,j)$ on
the $n\times n$ lattice or discrete torus, we can use the tools of
commutative algebra to attack these combinatorial problems. While the
use of commutative algebra in combinatorics is not new
\cite{rS96,mK05}, this is the first time that we are aware of that
such methods have been used in connection to the no-three-in-line
problem. In what follows below, $K = \Z_2$ and we will always be
working with a quotient ring
\[
R = K[x_{1,1},\dots,x_{n,n}]/I
\]
where $I$ is an ideal generated by a set of ``undesirable''
points. Specifically, $I$ will contain all products of indeterminates
representing ``three points in a line,'' and squares of every
indeterminate of $K[x_{1,1},\dots,x_{n,n}]$.  As an example, for the
$3\times 3$ lattice,
\begin{align*}
I_\l = ( &x_{1,1}x_{2,1}x_{3,1}, x_{1,1}x_{1,2}x_{1,3}, x_{1,2}x_{2,2}x_{3,2}, x_{2,1}x_{2,2}x_{2,3}, \\
       &x_{1,3}x_{2,3}x_{3,3}, x_{3,1}x_{3,2}x_{3,3}, x_{1,3}x_{2,2}x_{3,1}, x_{1,1}x_{2,2}x_{3,3}, \\
       &x_{1,1}^2, x_{1,2}^2, x_{1,3}^2, x_{2,1}^2, x_{2,2}^2, x_{2,3}^2, x_{3,1}^2, x_{3,2}^2, x_{3,3}^2)
\end{align*}
Looking at the subscripts we see the vertical, horizontal and diagonal
lines on the $3\times 3$ lattice represented as degree three
monomials. Of course, for larger $n$ there are many more lines and
therefore many more such products in the ideal.  Next we see perfect
square monomials, representing the fact that no point can occupy the
same spot twice. On the torus, we have $4$ extra monomials in the
ideal:
\begin{align*}
I = I_\l + ( &x_{1,1}x_{2,3}x_{3,2}, x_{1,2}x_{2,1}x_{3,3}, x_{1,2}x_{2,3}x_{3,1}, x_{1,3}x_{2,1}x_{3,2})
\end{align*}
If one inspects these monomials, we see that they correspond exactly
to lines on the torus that do not exist on the lattice. Hence we see
that monomials of degree $d$ in $R$ will correspond to arrangements of
$d$ points on the discrete torus where no three of those points are in
a line. To see how this setup will allow us to attack this problem, we
need the following well-known definitions; while we restrict ourselves
to the setting of our work, the curious reader may consult \cite{KR05}
for a complete development.

\begin{dfn}
The \textbf{Hilbert function} $\hf_K:\N\to \N$ is defined by
\[
\hf_R(d) := \dim_K(R_d)
\]
where  $R_d$ is the $K$-vector subspace of homogeneous polynomials of degree $d$.
\end{dfn}
In our setting $R = K[\x]/I$, hence a degree $d$ basis of $R$ is a
list of all arrangements of $d$ points on the $n\times n$ lattice or
torus, with no three in a line. Thus $\hf_R(d)$ will correspond to the
number of arrangements of $d$ points on the $n\times n$ discrete torus
with no three in a line.

It is important to notice that since the ideal $I$ in our definition
of $R$ will always contain the squares of each indeterminate of
$K[x_{1,1},\dots,x_{n,n}]$, we see that $\hf_R(d) = 0$ whenever
$d>n^2$.  As such, we can rephrase the no-three-in-line problem as the
following:

\begin{ques}
With $R = K[\x]/I$ as defined above, what is the greatest degree $d$
such that $\hf_R(d) \neq 0$?
\end{ques}

With this re-phrasing in mind, we wish to obtain as much information
as possible regarding the Hilbert function of $R$. Hence we are
interested in the generating function for the Hilbert function, known
as the \textit{Hilbert series} of $R$:

\begin{dfn}
The \textbf{Hilbert series} of a quotient of a polynomial ring $R =
K[\x]/I$, is a power series whose degree $n$ coefficients are exactly
$\hf_R(n)$.
\end{dfn}


A possible advantage to using Hilbert series to study the
no-three-in-line problem, especially over the majority of methods that
are seen elsewhere, is that they give information about placing any
number of points---not just a maximum number of points as the
coefficient of the degree $k$ term is exactly the number of ways that
$k$ points can be placed on an $n \times n$ lattice or torus. Since
our ideal contains the square of every indeterminate, our Hilbert
series will always have finite degree, and hence will be a
polynomial. The degree of this polynomial will always be the size of
the largest possible solution to the no-three-in-line problem on a
lattice or torus.

By encoding this problem in the language of commutative ring theory,
we were able to use the computer algebra system \texttt{Macaulay2},
\cite{M2}, to compute the Hilbert function, Hilbert series, and
relevant bases of our rings.

\subsection{A survey of our findings}

We have been able to reproduce some of the known results on the
no-three-in-line problem for the $n\times n$ lattice via computations
involving the ideals above and their corresponding Hilbert series.  By
looking at the degree of the highest order term and it's coefficient,
we found the highest number of points that can be placed on the
lattice and the number of solutions of that size, respectively.  The
following table lists these data for the first 5 non-trivial cases:
\[
    \begin{array}{c || c | c}
        n & \#\text{ of Points} & \#\text{ of Solutions} \\ \hline\hline
        3 & 6 & 2 \\
        4 & 8 & 11 \\
        5 & 10 & 32 \\
        6 & 12 & 50 \\
        7 & 14 & 132 \\
        \vdots & \vdots & \vdots \\
    \end{array}
\]
Here is the same table for tori:
\[
    \begin{array}{c || c | c}
        n & \#\text{ of Points} & \#\text{ of Solutions} \\ \hline\hline
        3 & 4 & 6 \\
        4 & 6 & 2 \\
        5 & 6 & 40 \\
        6 & 8 & 6 \\
        7 & 8 & 126 \\
        \vdots & \vdots & \vdots \\
    \end{array}
\]
Comparing the two tables, one of the most striking differences is the
surprisingly low number of solutions for even tori while the odd tori
stay fairly close in number to the lattice solutions.  More unexpected
is the size of the solutions, with the tori containing fewer points
every time.  Another interesting detail is the repetition of solutions
sizes.  The progression of torus solution sizes is:
\[
1,4,4,6,6,8,8,8,9,12,12,\dots
\]
However, one anomaly is far more intriguing than the rest.  For the
$14 \times 14$ discrete torus, only 12 points can be placed.  This is
of particular interest as the size of the torus exceeds the size of
its maximal solutions.  For rectangular tori, we have collected the
following data:
\[
    \begin{array}{ c || c | c | c | c | c | c | c | c | c | c | c | c | c | c | c | c | c | c |}
        m\backslash n & 2 & 3 & 4 & 5 & 6 & 7 & 8 & 9 & 10 & 11 & 12 & 13 & 14 & 15 & 16 & 17 & 18 & 19 \\ \hline \hline
        2      & 4 & 2 & 4 & 2 & 4 & 2 & 4 & 2 & 4  & 2  & 4  & 2  & 4  & 2  & 4  & 2  & 4  & 2  \\ \hline
        3      &   & 4 & 2 & 2 & 4 & 2 & 2 & 6 & 2  & 2  & 4  & 2  & 2  & 4  & 2  & 2  & 6  & 2  \\ \hline
        4      &   &   & 6 & 2 & 4 & 2 & 8 & 2 & 4  & 2  & 6  & 2  & 4  & 2  & 8  & 2  & 4  & 2  \\ \hline
        5      &   &   &   & 6 & 2 & 2 & 2 & 2 & 6  & 2  & 2  & 2  & 2  & 6  & 2  & 2  & 2  & 2  \\ \hline
        6      &   &   &   &   & 8 & 2 & 4 & 6 & 4  & 2  & 8  & 2  & 4  & 4  & 4  & 2  & 10 & 2  \\ \hline
        7      &   &   &   &   &   & 8 & 2 & 2 & 2  & 2  & 2  & 2  & 8  & 2  & 2  & 2  & 2  & 2  \\ \hline
    \end{array}
\]
This table shows how many points can be placed on the $m\times n$
torus, with no three in a line. This data was used in formulating the
conjectures that eventually became our maximal constructions above.

\bibliographystyle{alpha}
\bibliography{sources.bib}

\end{document}